\documentclass[11pt,leqno]{amsart}

\usepackage[a4paper,left=1.5cm,top=2cm,right=1.5cm,bottom=2cm]{geometry}
\usepackage[T1]{fontenc}
\usepackage{amsmath,amssymb,mathtools}
\usepackage[hidelinks]{hyperref}

\newtheorem{proposition}{Proposition}
\newtheorem{corollary}[proposition]{Corollary}
\newtheorem{lemma}[proposition]{Lemma}
\theoremstyle{remark}
\newtheorem{remark}[proposition]{Remark}

\DeclareMathOperator{\conv}{conv}

\title[Erratum]{Erratum and a counterexample to
\textquotedblleft Stability of diametral diameter two properties\textquotedblright}

\author{Johann Langemets}
\address{Institute of Mathematics and Statistics, University of Tartu,
Narva mnt 18, 51009 Tartu, Estonia}
\email{johann.langemets@ut.ee}
\thanks{This work was supported by the Estonian Research Council grant
(PRG2545) and partially supported by the grant PID2025-167660NB-I00
funded by MICIU/AEI/10.13039/501100011033 and ERDF/EU}
\urladdr{\url{https://www.johannlangemets.com/}}

\subjclass[2020]{46B04, 46B20, 46B22}
\keywords{Convex DLD2P, $M$-ideal, $\Delta$-point,
almost $\Delta$-point, local diameter two property}
\date{}

\begin{document}

\begin{abstract}
In Proposition~3.5 of our paper
[J.~Langemets and K.~Pirk,
\emph{Stability of diametral diameter two properties},
RACSAM (2021)], we claimed that
the convex DLD2P lifts from an $M$-ideal to its superspace.
The proof contains a quantifier error. We show that its valid part
yields the following replacement: for every $\varepsilon>0$, every
slice of the unit ball of the superspace contains a
$(2-\varepsilon)$ $\Delta$-point of norm arbitrarily close to one.
In particular, the unit ball is the closed convex hull of these points
at each fixed scale, and the superspace has the local diameter two
property. Finally, we prove that the original claim was false, we construct Banach spaces $Y$ and $X$ such that $Y$ has the convex DLD2P, $Y$ is an $M$-ideal in $X$, yet $X$ fails the convex DLD2P. 
\end{abstract}

\maketitle

\section{Introduction and the error}

Throughout, all Banach spaces are real and nonzero.
We denote the closed unit ball and unit sphere of a Banach space $X$
by $B_X$ and $S_X$, respectively, and its dual space by $X^*$.
For $x^*\in S_{X^*}$ and $\alpha>0$, a slice of $B_X$ is denoted by
\[
S(B_X,x^*,\alpha):=\{x\in B_X:x^*(x)>1-\alpha\}.
\]
For $x\in B_X$ and $\varepsilon>0$, set
\[
\Delta^X_\varepsilon(x)
:=\{y\in B_X:\|x-y\|\geq 2-\varepsilon\}.
\]
Here, unlike in \cite{LangemetsPirk2021}, we allow the centre $x$
to be any element of $B_X$; this agrees with the terminology for
almost $\Delta$-points introduced in
\cite[Section~6]{AbrahamsenLimaMartinyPerreau2022}.

An element $x\in S_X$ is a $\Delta$-point if
\[
x\in\overline{\conv}\,\Delta^X_\varepsilon(x)
\qquad\text{for every }\varepsilon>0.
\]
We denote the set of all $\Delta$-points of $X$ by $\Delta_X$.
Recall from \cite{AbrahamsenHallerLimaPirk2020} that $X$ has the
\emph{convex diametral local diameter two property} (convex DLD2P) if
\[
B_X=\overline{\conv}\,\Delta_X.
\]
Equivalently, every slice of $B_X$ intersects $\Delta_X$.
The space $X$ has the \emph{local diameter two property} (LD2P) if every
slice of $B_X$ has diameter two.

Proposition~3.5 of \cite{LangemetsPirk2021} asserted that if $Y$
is an $M$-ideal in $X$ and $Y$ has the convex DLD2P, then $X$
has the convex DLD2P. For each choice of a tolerance, the proof
constructs a point in a prescribed slice which is close to the
convex hull of points at distance almost two from it. The
constructed point, however, depends on the tolerance. Thus the
proof provides a point separately for each tolerance, whereas
establishing a $\Delta$-point requires a single point that satisfies
the defining condition for every tolerance. The scaling argument
in the last paragraph of that proof does not resolve this
interchange of quantifiers.

Proposition~\ref{prop:replacement} below gives a valid replacement,
and Proposition~\ref{prop:counterexample} shows that the original
assertion is false. Proposition~3.5 of \cite{LangemetsPirk2021}
and the accompanying assertions in its introduction and Section~3
that the convex DLD2P lifts from an $M$-ideal to its superspace
must therefore be withdrawn. No other result in \cite{LangemetsPirk2021} is affected by this error.

\section{A valid replacement for Proposition 3.5}

Following \cite[Section~6]{AbrahamsenLimaMartinyPerreau2022},
an element $x\in B_X$ is called a $(2-\varepsilon)$ $\Delta$-point
if $x\in\overline{\conv}\,\Delta^X_\varepsilon(x)$.
For $\varepsilon>0$, put
\[
\Delta_X^{(\varepsilon)}
:=\{x\in B_X:x\in\overline{\conv}\,\Delta^X_\varepsilon(x)\}.
\]
A Banach space \emph{admits almost $\Delta$-points} if
$\Delta_X^{(\varepsilon)}\neq\varnothing$ for every $\varepsilon>0$.

The original argument yields a conclusion at each fixed positive scale. Although the center may depend on the scale, it can be chosen to be exactly a finite convex combination of points almost diametral to it.

\begin{proposition}\label{prop:replacement}
Let $X$ be a Banach space and let $Y$ be an $M$-ideal in $X$.
If $Y$ has the convex DLD2P, then, for every slice $S$ of $B_X$
and every $\varepsilon,\delta>0$, there exist
$v,u_1,\ldots,u_n\in B_X$ and
$\lambda_1,\ldots,\lambda_n\geq0$ with
$\sum_{i=1}^n\lambda_i=1$ such that
\[
v\in S,\qquad \|v\|>1-\delta,\qquad
v=\sum_{i=1}^n\lambda_i u_i,\qquad
\|v-u_i\|>2-\varepsilon\quad(i=1,\ldots,n).
\]
In particular, $v\in\conv\Delta^X_\varepsilon(v)$.
\end{proposition}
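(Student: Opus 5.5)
We will use the standard $M$-ideal machinery. Since $Y$ is an $M$-ideal in $X$, we have $X^*=Y^\perp\oplus_1 Y^\#$, where $Y^\#$ is isometric to $Y^*$ via restriction. We also use the following well-known localisation property, valid for every $M$-ideal. Given $x_1,\ldots,x_m\in B_X$, $y_1,\ldots,y_m\in B_Y$ and $\eta>0$, there is $y\in Y$ such that
\[
\|x_j+y_j-y\|\le 1+\eta\qquad(j=1,\ldots,m).
\]
Equivalently, there is a net $(y_\gamma)$ in $Y$ with $y_\gamma\to x$ $\sigma(X,Y^*)$ and $\limsup\|x\pm(y-y_\gamma)\|\le1$ for $y\in B_Y$. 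This can be taken from the literature (Werner's lemma / Harmand–Werner–Werner) as in the original paper.

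\textbf{Step 1: split the slice.} Fix the slice $S=S(B_X,x^*,\alpha)$ with $\|x^*\|=1$, and write $x^*=y^\perp+y^\#$ with $\|y^\perp\|+\|y^\#\|=1$. Choose $x_0\in S_X$ with $x^*(x_0)>1-\alpha/4$, and also $y^\perp(x_0)>\|y^\perp\|-\alpha/4$. If $y^\#\neq0$, use the convex DLD2P of $Y$ in the slice of $B_Y$ given by $y^\#/\|y^\#\|$ to pick a $\Delta$-point $y_0\in\Delta_Y$ with $y^\#(y_0)>\|y^\#\|-\alpha/4$. If $y^\#=0$, take any $\Delta$-point of $Y$; the convex DLD2P guarantees $\Delta_Y\ne\varnothing$. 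Since $y_0$ is a $\Delta$-point, for the given $\varepsilon$ there are $z_1,\ldots,z_n\in B_Y$ and $\lambda_i\ge0$ summing to one with $\|y_0-\sum\lambda_iz_i\|<\eta$ and $\|y_0-z_i\|>2-\eta$. We may replace $y_0$ by $y_0':=\sum\lambda_iz_i$, which is a finite convex combination exactly, and accept a loss of $\eta$.

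\textbf{Step 2: glue.} Apply the localisation property to $x_0$, $y_0'$, $z_1,\ldots,z_n$ with small $\eta$, obtaining $y\in Y$. Put
\[
v:=x_0-y+y_0',\qquad u_i:=x_0-y+z_i.
\]
Then $v=\sum\lambda_iu_i$ exactly, and $\|v-u_i\|=\|y_0'-z_i\|>2-2\eta$. The norms of $v$ and $u_i$ are at most $1+\eta$. We choose $y$ in the net so that $y^\perp(x_0-y)=y^\perp(x_0)$ and $y^\#(x_0-y)$ is small; this uses the $\sigma(X,Y^*)$-convergence. Since $y$ lies in $Y$ and $y^\perp$ annihilates $Y$, we get
\[
x^*(v)\approx y^\perp(x_0)+y^\#(y_0')>1-\alpha/2-O(\eta).
\]
This gives a lower bound $\|v\|>1-\alpha/2-O(\eta)$ as well.

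\textbf{Step 3: normalise and get $\|v\|>1-\delta$.} Divide everything by $1+\eta$. This keeps $v$ an exact convex combination of the $u_i\in B_X$ and keeps the distances above $2-\varepsilon$ for small $\eta$. However, the norm bound $\|v\|>1-\delta$ is not automatic when $\alpha$ is large. We handle this by first shrinking the slice: replace $S$ by $S(B_X,x^*,\alpha')\subset S$ with $\alpha'<\min(\alpha,\delta)$. Then $\|v\|\ge x^*(v)>1-\delta$. Alternatively, we can use the distance inequality: $\|v\|\ge\|v-u_i\|-\|u_i\|>1-\varepsilon$.

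\textbf{Main obstacle.} The delicate point is Step 2: choosing $y$ so that the $M$-ideal estimate holds simultaneously for the finitely many points $y_0',z_1,\ldots,z_n$, while $x^*(v)$ stays inside the slice. This requires combining the norm estimate with the weak-type convergence $y_\gamma\to x_0$ against $y^\#$. We will try first to pass to a subnet satisfying both, using that only finitely many functionals and finitely many points are involved.
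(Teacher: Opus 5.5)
Your proposal is correct and follows essentially the same route as the paper. You split $x^*$ via the $L$-projection, take a $\Delta$-point of $Y$ in the slice given by the $Y^*$-part, and replace it by an exact convex combination $\sum\lambda_i z_i$. You then glue with a point norming the $Y^\perp$-part using Werner's basic inequality (your net formulation, where the $\sigma(X,Y^*)$-convergence controls $y^{\#}(x_0-y)$), and rescale by $1+\eta$. Estimating $x^*(v)$ directly and shrinking the slice to $\alpha'<\min\{\alpha,\delta\}$ are harmless variants of the paper's detour through $u$ and its choice $\widetilde\varepsilon=\min\{\varepsilon,\delta\}$; your ``alternative'' bound gives $1-\delta$ only after such a replacement of $\varepsilon$.

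Two of the tools are misstated, though your argument only uses their correct forms. The localisation property must be stated with a single $x\in B_X$, and only the plus sign in $\limsup\|x\pm(y-y_\gamma)\|\le1$ is valid. With distinct $x_j$, or with the minus sign, these statements are false in general.
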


\begin{proof}
Let $S=S(B_X,x^*,\alpha)$, where $x^*\in S_{X^*}$ and $\alpha>0$.
Let $P\colon X^*\to X^*$ be the $L$-projection with
$\ker P=Y^\perp$. Restriction to $Y$ identifies $\operatorname{ran}P$
isometrically with $Y^*$. The $L$-decomposition gives
\[
\|Px^*\|+\|x^*-Px^*\|=1.
\]

Set $\widetilde\varepsilon:=\min\{\varepsilon,\delta\}$ and choose
$\gamma,\theta,\rho>0$ so small that
\begin{equation}\label{eq:parameters}
3\gamma+\frac{\rho}{1+\gamma}<\alpha
\qquad\text{and}\qquad
\frac{2-\theta-\rho}{1+\gamma}>2-\widetilde\varepsilon.
\end{equation}

Suppose first that $Px^*\neq0$, and define
\[
y^*:=\frac{(Px^*)|_Y}{\|Px^*\|}\in S_{Y^*},
\qquad
\beta:=\frac{\gamma(1-\|Px^*\|)+\gamma^2}{\|Px^*\|}>0.
\]
Since $Y$ has the convex DLD2P, we may choose
\[
y\in S(B_Y,y^*,\beta)\cap\Delta_Y.
\]
The choice of $\beta$ gives
\begin{equation}\label{eq:Py}
Px^*(y)>(\|Px^*\|-\gamma)(1+\gamma).
\end{equation}
If $Px^*=0$, choose any $y\in\Delta_Y$; then
\eqref{eq:Py} holds automatically.

Since $y$ is a $\Delta$-point, there exist
$y_1,\ldots,y_n\in B_Y$ and convex coefficients
$\lambda_1,\ldots,\lambda_n$ such that, with
$\bar y:=\sum_{i=1}^n\lambda_i y_i$,
\begin{equation}\label{eq:yapprox}
\|y-\bar y\|<\rho
\qquad\text{and}\qquad
\|y-y_i\|\geq2-\theta\quad(i=1,\ldots,n).
\end{equation}
Choose $x\in B_X$ such that
\begin{equation}\label{eq:x0}
(x^*-Px^*)(x)>
(\|x^*-Px^*\|-\gamma)(1+\gamma).
\end{equation}
This is possible because $(b-\gamma)(1+\gamma)<b$
for every $b\in[0,1]$.

By the basic inequality for $M$-ideals
\cite[Proposition~2.3]{Werner1994}, there exists $z\in Y$ such that
\begin{equation}\label{eq:basic}
\begin{aligned}
\|y+x-z\|&<1+\gamma,\\
\|y_i+x-z\|&<1+\gamma\quad(i=1,\ldots,n),\\
|Px^*(x-z)|&<\gamma.
\end{aligned}
\end{equation}
Define
\[
u:=\frac{y+x-z}{1+\gamma},\qquad
u_i:=\frac{y_i+x-z}{1+\gamma}\quad(i=1,\ldots,n),
\qquad v:=\sum_{i=1}^n\lambda_i u_i.
\]
Then $u,u_i,v\in B_X$. From \eqref{eq:Py}, \eqref{eq:x0},
and \eqref{eq:basic}, we obtain
\begin{align*}
x^*(u)
&=\frac{Px^*(y)+(x^*-Px^*)(x)+Px^*(x-z)}{1+\gamma}\\
&>\frac{(\|Px^*\|-\gamma)(1+\gamma)
       +(\|x^*-Px^*\|-\gamma)(1+\gamma)-\gamma}{1+\gamma}\\
&=1-2\gamma-\frac{\gamma}{1+\gamma}>1-3\gamma.
\end{align*}
Moreover,
\[
\|v-u\|=\frac{\|\bar y-y\|}{1+\gamma}
<\frac{\rho}{1+\gamma}.
\]
Consequently, by \eqref{eq:parameters},
\[
x^*(v)>1-3\gamma-\frac{\rho}{1+\gamma}>1-\alpha,
\]
and hence $v\in S$. Finally, for every $i$,
\begin{align*}
\|v-u_i\|
&=\frac{\|\bar y-y_i\|}{1+\gamma}\\
&\geq\frac{\|y-y_i\|-\|y-\bar y\|}{1+\gamma}\\
&>\frac{2-\theta-\rho}{1+\gamma}
>2-\widetilde\varepsilon\geq2-\varepsilon.
\end{align*}
Thus $v=\sum_i\lambda_i u_i\in\conv\Delta^X_\varepsilon(v)$.
Also,
\[
\|v\|\geq\|v-u_i\|-\|u_i\|
>1-\widetilde\varepsilon\geq1-\delta.
\qedhere
\]
\end{proof}

\begin{corollary}\label{cor:almost}
Let $X$ be a Banach space and let $Y$ be an $M$-ideal in $X$.
If $Y$ has the convex DLD2P, then
\[
B_X=\overline{\conv}\,\Delta_X^{(\varepsilon)}
\qquad(\varepsilon>0).
\]
In particular, $X$ admits almost $\Delta$-points and has the LD2P.
\end{corollary}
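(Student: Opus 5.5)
The plan is to derive Corollary~\ref{cor:almost} directly from Proposition~\ref{prop:replacement} by a Hahn--Banach separation argument, and then read off the two ``in particular'' statements.

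Fix $\varepsilon>0$. Since $\Delta_X^{(\varepsilon)}\subseteq B_X$ and $B_X$ is closed and convex, we have $\overline{\conv}\,\Delta_X^{(\varepsilon)}\subseteq B_X$. For the reverse inclusion it suffices to show that every slice of $B_X$ meets $\Delta_X^{(\varepsilon)}$. Indeed, suppose some $x_0\in B_X$ lies outside the closed convex set $C:=\overline{\conv}\,\Delta_X^{(\varepsilon)}$. By Hahn--Banach there is $x^*\in S_{X^*}$ with $x^*(x_0)>\sup_C x^*=:s$. Then $s<x^*(x_0)\le 1$, so the slice $S(B_X,x^*,1-s)$ is nonempty and disjoint from $C$, which is a contradiction. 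Now take any slice $S$. Proposition~\ref{prop:replacement}, applied with this $\varepsilon$ and any $\delta$ (say $\delta=1$), produces $v\in S$ with $v\in\conv\Delta^X_\varepsilon(v)\subseteq\overline{\conv}\,\Delta^X_\varepsilon(v)$. Hence $v\in\Delta_X^{(\varepsilon)}\cap S$. This gives $B_X=\overline{\conv}\,\Delta_X^{(\varepsilon)}$ for every $\varepsilon>0$. In particular $\Delta_X^{(\varepsilon)}\neq\varnothing$ for every $\varepsilon>0$, so $X$ admits almost $\Delta$-points.

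For the LD2P, fix a slice $S$ and $\varepsilon>0$. The first step is to get $v,u_1,\dots,u_n$ as in Proposition~\ref{prop:replacement} with $v=\sum\lambda_iu_i\in S$ and $\|v-u_i\|>2-\varepsilon$. The difficulty is that the $u_i$ need not lie in $S$. However, if $S=S(B_X,x^*,\alpha)$, then $x^*(v)=\sum\lambda_i x^*(u_i)>1-\alpha$, so some index $i$ satisfies $x^*(u_i)\ge x^*(v)>1-\alpha$, i.e.\ $u_i\in S$. Then $v,u_i\in S$ and $\|v-u_i\|>2-\varepsilon$, so $\operatorname{diam}S\ge 2-\varepsilon$. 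Letting $\varepsilon\to0$ gives $\operatorname{diam}S=2$.

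The only mildly delicate point is this last averaging step, which places one of the diametral points $u_i$ inside the slice. Everything else is a routine separation argument.
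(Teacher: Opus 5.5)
Your proof is correct. The first part follows the paper's argument exactly: you reduce $B_X=\overline{\conv}\,\Delta_X^{(\varepsilon)}$ via Hahn--Banach to showing that every slice meets $\Delta_X^{(\varepsilon)}$, and then apply Proposition~\ref{prop:replacement}. You simply write out the separation step that the paper leaves implicit.

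For the LD2P you take a different route. The paper does not use Proposition~\ref{prop:replacement} at that point. It cites two results: the convex DLD2P of $Y$ implies the LD2P of $Y$ (Abrahamsen--Haller--Lima--Pirk), and the LD2P lifts from an $M$-ideal to its superspace (Haller--Langemets).

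Your argument is instead self-contained. From $x^*(v)=\sum_i\lambda_i x^*(u_i)>1-\alpha$ you get some $u_i$ in the same slice, and $\|v-u_i\|>2-\varepsilon$. This shows that LD2P holds in any space in which every slice of $B_X$ meets $\Delta_X^{(\varepsilon)}$ for every $\varepsilon>0$. So the LD2P already follows from the corrected Proposition~\ref{prop:replacement}, with no external input.

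The paper's route gives something different. It shows that this part of the conclusion needs only the LD2P of $Y$, a weaker hypothesis than the convex DLD2P, and is independent of the replacement proposition.
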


\begin{proof}
For each $\varepsilon>0$, Proposition~\ref{prop:replacement}
shows that every slice of $B_X$ meets $\Delta_X^{(\varepsilon)}$.
The displayed equality follows by Hahn--Banach separation,
and the existence of almost $\Delta$-points follows immediately.

The space $X$ has the local diameter two property, because the convex DLD2P of $Y$ implies the LD2P of $Y$ \cite[Proposition~5.2]{AbrahamsenHallerLimaPirk2020} and the LD2P lifts from $Y$ to $X$ \cite[Proposition~4]{HallerLangemets2014}.
\end{proof}

\section{A counterexample to Proposition~3.5}

We first show that a single $\Delta$-point of a Banach space
suffices for its $c_0$-sum to have the convex DLD2P.

\begin{lemma}\label{lem:c0-four-delta}
Let $X$ be a Banach space containing a $\Delta$-point.
Then every element of $B_{c_0(X)}$ is the average of four
$\Delta$-points of $c_0(X)$. In particular,
\[
B_{c_0(X)}=\conv(\Delta_{c_0(X)}).
\]
\end{lemma}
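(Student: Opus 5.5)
The plan is to show that every element of $B_{c_0(X)}$ is the average of four $\Delta$-points of $c_0(X)$. I will use one basic building block: if $e\in S_X$ is a $\Delta$-point of $X$ and $w=(w_k)\in c_0(X)$ satisfies $\|w\|\le 1$, then placing $e$ in a coordinate where $w$ vanishes gives a $\Delta$-point of $c_0(X)$. More precisely, fix such an index $m$ and let $x=(x_k)$ with $x_m=e$ and $x_k=w_k$ for $k\neq m$. Then $\|x\|=1$.

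To see that $x\in\Delta_{c_0(X)}$, fix $\varepsilon>0$. Since $e$ is a $\Delta$-point of $X$, $e$ is a limit of convex combinations $\sum_j\mu_j e_j$ with $e_j\in B_X$ and $\|e-e_j\|\ge2-\varepsilon$. Now let $z_j$ agree with $x$ off coordinate $m$ and have $e_j$ in coordinate $m$. Then $z_j\in B_{c_0(X)}$ and $\|x-z_j\|\ge\|e-e_j\|\ge2-\varepsilon$. Moreover $\|x-\sum_j\mu_jz_j\|=\|e-\sum_j\mu_je_j\|$, which is small. Hence $x\in\overline{\conv}\,\Delta_\varepsilon(x)$ for every $\varepsilon$, so $x$ is a $\Delta$-point. (This coordinate-wise argument is the standard fact; I would write it out directly.)

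Next comes the decomposition. Given $x=(x_k)\in B_{c_0(X)}$, all I need are two free coordinates. Since $x_k\to0$, I cannot simply take unused coordinates; instead I shift. Define the operator $T$ that inserts two zeros: $(Tx)_1=(Tx)_2=0$ and $(Tx)_{k+2}=x_k$. This is not a decomposition of $x$ itself, however, so I will instead split $x=\tfrac12(a+b)$ with $a,b\in B_{c_0(X)}$ each vanishing on some coordinate.

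For the splitting, write $x=\tfrac12(x+x)$ and modify the first coordinate: set $a_1=x_1+(e-x_1)$ and $b_1=x_1-(e-x_1)$, that is, $a_1=e$ and $b_1=2x_1-e$. The problem is that $\|2x_1-e\|$ can exceed $1$. The cleaner route is to pick two coordinates whose norms are tiny. Choose $N$ with $\|x_k\|\le\eta$ for $k\ge N$. Then $x=\tfrac14\sum_{s=1}^4 w^{(s)}$, where the four vectors agree with $x$ off $\{N,N+1\}$ and take the values $(x_N\pm e,\;x_{N+1}\pm e)$, with all four sign choices, in those two coordinates. The average is indeed $x$. The trouble is that $\|x_N+e\|$ may exceed $1$, and the coordinates are then not exactly $\pm e$, so the building block does not apply as stated.

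The fix is to choose the coordinate entries as follows. In coordinate $N$, use $\pm e$ paired across signs so that the average is $x_N$; this forces $x_N=0$, which need not hold. I will therefore handle $x_N$ by splitting between the two coordinates. In the signs $(\sigma,\tau)$, put coordinate $N$ equal to $\sigma e$ plus a $\tau$-dependent correction, absorbing $x_N$ and $x_{N+1}$ through the fact that a point of $c_0(X)$ is a $\Delta$-point as soon as one coordinate is a $\Delta$-point of norm $1$ and the others have norm at most $1$. Note that $-e$ is also a $\Delta$-point. So I take
\[
w^{(\sigma,\tau)}_N=\sigma e+\tau(2x_N)\ \text{(or similar)},\qquad w^{(\sigma,\tau)}_{N+1}=\tau e+\sigma(2x_{N+1}).
\]
This is the step I expect to be the main obstacle. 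The point is that the coordinate holding $\pm e$ must be exactly a $\Delta$-point while the other coordinate only needs norm at most $1$. I would therefore let the $\pm e$ sit alone in one coordinate and put $2x_N$, $2x_{N+1}$ (norm at most $2\eta\le1$) in the other, arranged so that the four averages reproduce $x_N$ and $x_{N+1}$.
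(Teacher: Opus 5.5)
Your final construction is exactly the paper's proof. Take two tail indices $N,N+1$ with $\|x_N\|,\|x_{N+1}\|\le\frac12$, and use in those coordinates the four pairs $(e,2x_{N+1})$, $(-e,2x_{N+1})$, $(2x_N,e)$, $(2x_N,-e)$; each resulting vector is a $\Delta$-point by your coordinatewise building block. In the write-up, state this table explicitly and drop the displayed $(\sigma,\tau)$ formula, whose entries are not exactly $\pm e$ and whose four-fold average in those coordinates is $0$ rather than $(x_N,x_{N+1})$.
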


\begin{proof}
We first note that if $z=(x_k)\in S_{c_0(X)}$ and
$x_m\in\Delta_X$ for some $m$, then $z\in\Delta_{c_0(X)}$.
Indeed, fix $\varepsilon,\eta>0$ and choose a convex combination
$\sum_i\lambda_i a_i$ with
\[
a_i\in\Delta_\varepsilon^X(x_m),
\qquad
\left\|x_m-\sum_i\lambda_i a_i\right\|<\eta.
\]
Let $z_i$ agree with $z$ except that its $m$th coordinate is $a_i$.
Then $z_i\in B_{c_0(X)}$,
$\|z-z_i\|\geq2-\varepsilon$, and
$\|z-\sum_i\lambda_i z_i\|<\eta$, proving the observation.
This is also a special case of
\cite[Theorem~4.1(2)(d)]{LeeRoldanTag}.

Fix $\widetilde x\in\Delta_X$ and let $z=(x_k)\in B_{c_0(X)}$.
Choose distinct indices $m,n$ such that
$\|x_m\|,\|x_n\|\leq1/2$.
Define $z^{(1)},\ldots,z^{(4)}$ to agree with $z$ outside
$\{m,n\}$ and prescribe their remaining coordinates by
\[
\begin{array}{c|cc}
 &m&n\\ \hline
z^{(1)}&\widetilde x&2x_n\\
z^{(2)}&-\widetilde x&2x_n\\
z^{(3)}&2x_m&\widetilde x\\
z^{(4)}&2x_m&-\widetilde x
\end{array}
\]
All four sequences belong to $S_{c_0(X)}$.
Since $\widetilde x$ and $-\widetilde x$ are $\Delta$-points,
the preceding observation gives $z^{(i)}\in\Delta_{c_0(X)}$
for every $i$. Finally,
\[
z=\frac14\sum_{i=1}^4z^{(i)}.
\qedhere
\]
\end{proof}

The next observation will be used to exclude $\Delta$-points from a
slice.

\begin{lemma}\label{lem:isolated-norming}
Let \(X\) be a Banach space and let
\(\mathcal S\subset B_{X^*}\) be a symmetric norming set, that is,
\[
    \|z\|=\sup_{f\in\mathcal S}f(z)
    \qquad (z\in X).
\]
Suppose that \(x\in S_X\), that
\[
    A(x):=\{f\in\mathcal S:f(x)=1\}
\]
is finite and nonempty, and that
\[
    \sup\{f(x):f\in\mathcal S\setminus A(x)\}<1.
\]
Then \(x\) is not a \(\Delta\)-point.
\end{lemma}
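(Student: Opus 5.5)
We argue by contradiction: if $x$ were a $\Delta$-point, the finitely many functionals in $A(x)$ would all have to be ``used up'' on average, which is impossible. Write $A(x)=\{f_1,\dots,f_k\}$ and put $s:=\sup\{f(x):f\in\mathcal S\setminus A(x)\}<1$. Fix a small $\varepsilon>0$ and suppose $x\in\overline{\conv}\,\Delta^X_\varepsilon(x)$. The idea is that each $y\in\Delta^X_\varepsilon(x)$ must be far from $x$ in the direction of some functional in $\mathcal S$, and the structure of $\mathcal S$ near $x$ forces that functional to be essentially $-f_j$ for some $j$.

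\emph{Step 1 (locating the norming functional).} For $y\in\Delta^X_\varepsilon(x)$ we have $\|x-y\|\ge 2-\varepsilon$. Using the symmetric norming property, pick $f\in\mathcal S$ with $f(x-y)>2-2\varepsilon$. Since $f(y)\ge-1$, this gives $f(x)>1-2\varepsilon$. Choose $\varepsilon$ with $1-2\varepsilon>s$; then $f\in A(x)$, so $f=f_j$ for some $j$. Moreover $f_j(y)<-1+2\varepsilon$. Hence every $y\in\Delta^X_\varepsilon(x)$ satisfies
\[
\min_{1\le j\le k} f_j(y)<-1+2\varepsilon .
\]

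\emph{Step 2 (averaging).} The function $g:=\sum_{j=1}^k f_j$ is linear and continuous. For $y\in\Delta^X_\varepsilon(x)$, one term of $g(y)$ is below $-1+2\varepsilon$ and each of the others is at most $1$. Therefore $g(y)<(k-1)-1+2\varepsilon=k-2+2\varepsilon$. This bound passes to the closed convex hull, so $g(x)\le k-2+2\varepsilon$. But $g(x)=k$, which is a contradiction as soon as $2\varepsilon<2$.

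\emph{Main obstacle.} The delicate point is Step 1. We must make sure that the approximately norming functional for $x-y$ is forced into the finite set $A(x)$. This is exactly where the gap hypothesis $s<1$ enters, through the choice $1-2\varepsilon>s$. Finiteness of $A(x)$ is what makes the averaging functional $g$ available in Step 2. Beyond these two points, the argument needs only the symmetry of $\mathcal S$, which guarantees that $\sup_{f\in\mathcal S}f(x-y)=\|x-y\|$ can be approached by a single element of $\mathcal S$.
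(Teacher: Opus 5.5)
Your proof is correct and is essentially the paper's argument. Both proofs average the finitely many functionals in $A(x)$ and use the resulting functional to separate $x$ from $\Delta^X_\varepsilon(x)$. The paper shows that no point of the slice $S\bigl(B_X,\frac1k\sum_j f_j,\frac1{2k}\bigr)$ is at distance close to $2$ from $x$, and then appeals to the slice characterization of $\Delta$-points. You instead show directly that $\Delta^X_\varepsilon(x)$ lies in the closed half-space $\{g\le k-2+2\varepsilon\}$, which misses $x$.

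One small correction: what lets you pick a single $f$ with $f(x-y)>2-2\varepsilon$ is the norming identity $\|z\|=\sup_{f\in\mathcal S}f(z)$, not the symmetry of $\mathcal S$; symmetry plays no role in either proof.
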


\begin{proof}
Write $A(x)=\{f_1,\ldots,f_n\}$
and set
\[
    g:=\frac1n\sum_{i=1}^n f_i.
\]
Observe that \(g\in S_{X^*}\) because $g(x)=1$. Put
\[
    \gamma
    :=\sup\{f(x):f\in\mathcal S\setminus A(x)\}<1
\]
and consider the slice $S(B_X,g,\frac{1}{2n})$. Clearly \(x\in S(B_X,g,\frac{1}{2n})\). 

Let \(y\in S(B_X,g,\frac{1}{2n})\). For every \(i\in\{1,\ldots,n\}\),
using \(f_j(y)\leq 1\) for \(j\neq i\), we obtain
\[
\begin{aligned}
    f_i(y)
      &=ng(y)-\sum_{j\neq i}f_j(y)\\
      &>n\left(1-\frac{1}{2n}\right)-(n-1)
       =\frac12.
\end{aligned}
\]
Consequently, if \(f\in A(x)\), then
\[
    f(x-y)=1-f(y)<\frac12.
\]
If \(f\in\mathcal S\setminus A(x)\), then \(f(x)\leq\gamma\), and hence
\[
    f(x-y)=f(x)-f(y)
       \leq \gamma+|f(y)|
       \leq \gamma+1.
\]
Because \(\mathcal S\) is norming, it follows that
\[
\begin{aligned}
    \|x-y\|
       &=\sup_{f\in\mathcal S}f(x-y)\\
       &\leq \max\left\{\frac12,1+\gamma\right\}
       <2.
\end{aligned}
\]
Thus
\[
    \sup_{y\in S(B_X,g,\frac{1}{2n})}\|x-y\|
       \leq \max\left\{\frac12,1+\gamma\right\}<2.
\]
We have therefore found a slice \(S(B_X,g,\frac{1}{2n})\) of \(B_X\) containing \(x\) on
which the distance from \(x\) is uniformly smaller than \(2\).
Hence \(x\) is not a \(\Delta\)-point.
\end{proof}

We are now ready to construct the promised counterexample.

\begin{proposition}\label{prop:counterexample}
There exist a Banach space $X$ and a codimension-one $M$-ideal
$Y$ in $X$ such that
\[
\Delta_X=\Delta_Y,\qquad
B_Y=\conv(\Delta_Y)=\conv(\Delta_X)\subsetneq B_X,
\]
where $Y$ is identified with its isometric copy in $X$.
In particular, $Y$ has the convex DLD2P, whereas $X$ does not.
\end{proposition}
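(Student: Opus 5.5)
The plan is to take $Y:=c_0(Z)$, where $Z$ is any Banach space containing a $\Delta$-point; for instance, $Z=L_1[0,1]$ or $Z=C[0,1]$, in which every point of the unit sphere is a $\Delta$-point. By Lemma~\ref{lem:c0-four-delta} we then have $B_Y=\conv(\Delta_Y)$, so $Y$ has the convex DLD2P. The superspace $X$ should be $Y\oplus\mathbb R$ with a norm of the form
\[
\|(y,t)\|=\sup_{f\in\mathcal S}f(y,t),
\]
where $\mathcal S$ is an explicit symmetric norming set. The norm is to be designed to meet three requirements.
\begin{enumerate}
\item Its restriction to $Y\times\{0\}$ is the $c_0(Z)$ norm.
\item $Y^\perp=\operatorname{span}\{(0,1)^*\}$ is an $L$-summand in $X^*$, so that $Y$ is a codimension-one $M$-ideal.
\item Every $x=(y,t)\in S_X$ with $t\neq0$ satisfies the hypotheses of Lemma~\ref{lem:isolated-norming}: $A(x)$ is finite and nonempty, and there is a uniform gap below $1$ for the remaining functionals.
\end{enumerate}

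The naive choice $X=Y\oplus_\infty\mathbb R$ is not enough. By the observation in the proof of Lemma~\ref{lem:c0-four-delta}, $(y,\pm1)$ is a $\Delta$-point whenever a norm-one coordinate of $y$ is a $\Delta$-point of $Z$. So the last coordinate must be coupled to $y$ so that norm-one points with $t\neq0$ are always exposed by finitely many functionals in an isolated way. The first attempt will be a norming set consisting of $\pm(g\circ\pi_k)$ for $g\in S_{Z^*}$, together with finitely many functionals involving $t$. These functionals should attain the value $1$ on $\{t\neq0\}\cap S_X$ only in a controlled, finite way.

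Once such a norm is fixed, the proof proceeds in four steps.
\begin{enumerate}
\item Verify the $M$-ideal property by checking the $L$-decomposition $\|x^*\|=\|x^*|_Y\|+\|x^*-Px^*\|$ directly on the dual. Alternatively, use the $3$-ball characterization with the explicit norm.
\item Show $\Delta_X\subseteq Y$ by applying Lemma~\ref{lem:isolated-norming} to every $x\in S_X$ with $t\neq0$.
\item Show $\Delta_Y\subseteq\Delta_X$, and conversely that a $\Delta$-point of $X$ lying in $Y$ is one of $Y$. The first inclusion holds because the approximating convex combinations can be taken inside $Y$. For the converse, Proposition~\ref{prop:replacement} style arguments give no help. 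Instead use that the slices of $B_X$ restrict to slices of $B_Y$ up to small perturbation, since $Y$ is an $M$-ideal and $\operatorname{ran}P\cong Y^*$. These two facts together give $\Delta_X=\Delta_Y$.
\item Conclude $B_Y=\conv(\Delta_Y)=\conv(\Delta_X)\subsetneq B_X$, since $(0,1)$-type points of $B_X$ lie outside $Y$.
\end{enumerate}

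I expect the main obstacle to be finding the norm, that is, reconciling requirement (3) with the $M$-ideal condition. The $M$-ideal condition tends to force $\ell_\infty$-like behaviour in the $t$-direction, and that behaviour is exactly what produces $\Delta$-points off $Y$.
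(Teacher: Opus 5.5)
Your framework is the paper's: $Y=c_0(Z)$ with $Z$ containing a $\Delta$-point, Lemma~\ref{lem:c0-four-delta} for $B_Y=\conv(\Delta_Y)$, Lemma~\ref{lem:isolated-norming} for $\Delta_X\subseteq Y$, and the $M$-ideal transfer $\Delta_X\cap Y\subseteq\Delta_Y$. The genuine gap is that you never produce a norm on $X=Y\oplus\mathbb R$ satisfying your requirements (1)--(3). That norm is the entire content of the proposition, and Steps 1--4 are checks that can only be run once it exists.

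Moreover, the first attempt you describe cannot work. Suppose $\mathcal S\subseteq B_{X^*}$ contains the functionals $(y,t)\mapsto g(y_k)$ for $g\in S_{Z^*}$. Each is a norm-preserving extension of $g\circ\pi_k\in S_{Y^*}$. Such extensions from an $M$-ideal are unique, so they lie in $\operatorname{ran}P$, where $P$ is the $L$-projection with kernel $Y^\perp$. They vanish at $(0,1)$, and their restrictions span a dense subspace of $Y^*=\ell_1(Z^*)$. Since restriction maps $\operatorname{ran}P$ isometrically onto $Y^*$, every element of $\operatorname{ran}P$ vanishes at $(0,1)$. Then $X^*=\operatorname{ran}P\oplus_1Y^\perp$ gives $\|(y,t)\|=\max\{\|y\|_Y,c|t|\}$ with $c=\operatorname{dist}((0,1),Y)$. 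This is exactly the $\ell_\infty$-sum you have already discarded.

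The same computation shows that any codimension-one $M$-ideal superspace carries, after rescaling $t$, the norm $\max\{\|y+ty_0^{**}\|_{Y^{**}},|t|\}$. Here $y_0^{**}(\phi)$ is the value at $(0,1)$ of the unique norm-preserving extension of $\phi\in Y^*$. So $t$ must enter infinitely many norming functionals. The whole problem is to choose $Z$ and $y_0^{**}\in Y^{**}\setminus Y$ so that the $M$-ideal property survives while no point with $t\neq0$ is a $\Delta$-point.

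The paper takes $Z=E=c\times c_0$, normed by $\max\{\|w\|_\infty,\|z\|_\infty,|\lim_n w_n|+\sup_n|z_n|/n\}$; its $\Delta$-point $(\mathbf 1,0)$ comes from the limit functional. It lets $t$ act as the shift $z^j_n\mapsto z^j_n+t/2$ in every coordinate, so $y_0^{**}$ has $z$-part constantly $\tfrac12$. Two features of this choice do all the work, and your outline has no counterpart to either.

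\textbf{Three-ball property.} The shift is cancelled by adding $t/2$ to $y_0$ on an $m\times m$ block of $z$-coordinates. Outside the block, the residual shift $t/2$ only meets coordinates of size at most $\varepsilon$. In the mixed term it is damped to $|t|/(2n)\le1/(2(m+1))$.

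\textbf{No $\Delta$-points off $Y$.} For $t\neq0$, the bound $|\ell_j|+|z^j_n+t/2|/n\le1$ rules out $|\ell_j|=1$, since that would force $z^j\equiv-t/2\notin c_0$. Hence no limit functional $\Lambda_j$ attains the norm. The countable norming set of coordinate-type functionals then has a finite attaining set with a gap, so Lemma~\ref{lem:isolated-norming} applies.

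Your Steps 3 and 4 are fine in outline. The paper takes $\Delta_X\cap Y\subseteq\Delta_Y$ from the proof of Theorem~3.2(b) of Langemets--Pirk and uses that $B_Y$ is closed in $X$. But both steps rest on the missing construction.
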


\begin{proof}
We will construct these spaces $Y$ and $X$ and verify their properties in several steps. 

\textbf{Step 1. Constructing the spaces $Y$ and $X$}.
Let $c$ and $c_0$ denote the spaces of convergent and null
real sequences. On $E=c\times c_0$ define
\[
\|(w,z)\|_E
=\max\left\{
\|w\|_\infty,\ \|z\|_\infty,\
\left|\lim_n w_n\right|+\sup_n\frac{|z_n|}{n}
\right\}.
\]
This is a norm equivalent to the usual product norm.
Put $Y=c_0(E)$ and write
\[
y=((w^j,z^j))_{j=1}^\infty,\qquad
\ell_j(y)=\lim_n w_n^j\quad(y\in Y).
\]
On the vector space $X=Y\oplus\mathbb R$ define
\[
\begin{split}
\|(y,t)\|_X=\max\Biggl\{
&|t|,\ \sup_j\|w^j\|_\infty,\
\sup_{j,n}\left|z_n^j+\frac t2\right|,\\
&\sup_{j,n}\left(
|\ell_j(y)|+\frac1n\left|z_n^j+\frac t2\right|
\right)\Biggr\}.
\end{split}
\]
This expression is a norm and restricts to the given norm on $Y$.
If $p(y,t)=\max\{\|y\|_Y,|t|\}$, then
\[
\frac23p(y,t)\leq\|(y,t)\|_X\leq\frac32p(y,t).
\]
Indeed, the shift by $t/2$ changes each relevant coordinate bound
by at most $|t|/2$, giving both
$\|(y,t)\|_X\leq p(y,t)+|t|/2$ and
$\|y\|_Y\leq\|(y,t)\|_X+|t|/2$.
Thus $X$ is a Banach space and $Y=Y\oplus\{0\}$ is an isometric
codimension-one subspace.

\medskip
\textbf{Step 2. The equality $B_Y=\conv(\Delta_Y)$}.
Let $e=(\mathbf1,0)\in S_E$, where $\mathbf1=(1,1,\ldots)$,
and denote the standard unit sequences by $p_k$.
The points
\[
u_k=(\mathbf1-2p_k,0)
\]
belong to $S_E$ and satisfy
\[
\|e-u_k\|_E=2,\qquad
\left\|e-\frac1N\sum_{k=1}^Nu_k\right\|_E=\frac2N.
\]
Hence $e\in\Delta_E$, and, thus Lemma~\ref{lem:c0-four-delta} gives
\[
B_Y=\conv(\Delta_Y).
\]

\medskip
\textbf{Step 3. $Y$ is an $M$-ideal in $X$}.
We verify the restricted three-ball property
\cite[Theorem~I.2.2(iv)]{HarmandWernerWerner1993}: for all
$y_1,y_2,y_3\in B_Y$, $x\in B_X$, and $\varepsilon>0$,
there exists $\widetilde y\in Y$ such that
\[
\|x+y_i-\widetilde y\|_X\leq1+\varepsilon
\qquad(i=1,2,3).
\]

Let
\[
x=(y_0,t)\in B_X,\qquad
y_i=((w_i^j,z_i^j))_{j=1}^\infty\in B_Y
\quad(i=1,2,3),
\]
and let $\varepsilon>0$. Write
\[
y_0=((w_0^j,z_0^j))_{j=1}^\infty,\qquad
\ell_{i,j}=\lim_n w_{i,n}^j.
\]
By the definitions of the norms, we have, for
$i=1,2,3$ and $j,n\in\mathbb N$,
\begin{equation}\label{eq:three-ball-basic}
\begin{gathered}
|t|\leq1,\qquad
\sup_j\|w_i^j\|_\infty\leq1,\qquad
|z_{i,n}^j|\leq1,\\
|\ell_{i,j}|+\frac{|z_{i,n}^j|}{n}
\leq\|(w_i^j,z_i^j)\|_E\leq1.
\end{gathered}
\end{equation}

Since $y_i\in c_0(E)$ and each $z_i^j\in c_0$, we may choose
$m\in\mathbb N$ sufficiently large that, for $i=1,2,3$,
\begin{align}
\sup_{j>m}\|(w_i^j,z_i^j)\|_E
&\leq\varepsilon,
\label{eq:three-ball-row-tail}\\
\sup_{\max\{j,n\}>m}|z_{i,n}^j|
&\leq\varepsilon,
\label{eq:three-ball-square-tail}\\
\frac1{2(m+1)}
&\leq\varepsilon.
\label{eq:three-ball-weight-tail}
\end{align}
Indeed, the arrays $(z_{i,n}^j)_{j,n}$ are uniformly small
outside sufficiently large finite squares, because
$z_i^j\in c_0$ and $\|z_i^j\|_\infty\to0$ as $j\to\infty$.

Define $\widetilde y=((\widetilde w^j,\widetilde z^j))_j$ by
\begin{equation}\label{eq:three-ball-correction}
\widetilde w^j=w_0^j,\qquad
\widetilde z_n^j
=z_{0,n}^j+\frac t2\mathbf1_{\{j\leq m,\ n\leq m\}}.
\end{equation}
This is a finite-coordinate perturbation of $y_0$, so
$\widetilde y\in Y$.

Fix $i\in\{1,2,3\}$. By \eqref{eq:three-ball-correction},
the $w$-rows of $x+y_i-\widetilde y$ are $w_i^j$.
Its scalar coordinate remains $t$, so its shifted
$z$-coordinates in the norm are
\begin{equation}\label{eq:three-ball-shifted}
\begin{aligned}
q_{i,j,n}
&:=z_{0,n}^j+z_{i,n}^j-\widetilde z_n^j+\frac t2\\
&=
\begin{cases}
z_{i,n}^j,&j,n\leq m,\\
z_{i,n}^j+t/2,&\max\{j,n\}>m.
\end{cases}
\end{aligned}
\end{equation}
Consequently,
\begin{equation}\label{eq:three-ball-residual-norm}
\begin{split}
\|x+y_i-\widetilde y\|_X
=\max\Biggl\{
&|t|,\ \sup_j\|w_i^j\|_\infty,\
\sup_{j,n}|q_{i,j,n}|,\\
&\sup_{j,n}\left(
|\ell_{i,j}|+\frac{|q_{i,j,n}|}{n}
\right)\Biggr\}.
\end{split}
\end{equation}

The first two terms in \eqref{eq:three-ball-residual-norm}
are at most $1$ by \eqref{eq:three-ball-basic}.
If $j,n\leq m$, then \eqref{eq:three-ball-shifted} and
\eqref{eq:three-ball-basic} give
\[
|q_{i,j,n}|\leq1,\qquad
|\ell_{i,j}|+\frac{|q_{i,j,n}|}{n}\leq1.
\]
Outside this square, \eqref{eq:three-ball-shifted},
\eqref{eq:three-ball-square-tail}, and the bound $|t|\leq1$
from \eqref{eq:three-ball-basic} yield
\[
|q_{i,j,n}|
\leq|z_{i,n}^j|+\frac{|t|}{2}
\leq\varepsilon+\frac12
\leq1+\varepsilon.
\]

For the mixed term, suppose first that $j>m$.
Using \eqref{eq:three-ball-shifted},
\eqref{eq:three-ball-basic}, and
\eqref{eq:three-ball-row-tail}, we obtain
\[
\begin{aligned}
|\ell_{i,j}|+\frac{|q_{i,j,n}|}{n}
&\leq|\ell_{i,j}|+\frac{|z_{i,n}^j|}{n}
+\frac{|t|}{2n}\\
&\leq\|(w_i^j,z_i^j)\|_E+\frac12\\
&\leq\varepsilon+\frac12
\leq1+\varepsilon.
\end{aligned}
\]
If $j\leq m<n$, then $n\geq m+1$. Hence
\eqref{eq:three-ball-shifted},
\eqref{eq:three-ball-basic}, and
\eqref{eq:three-ball-weight-tail} give
\[
\begin{aligned}
|\ell_{i,j}|+\frac{|q_{i,j,n}|}{n}
&\leq|\ell_{i,j}|+\frac{|z_{i,n}^j|}{n}
+\frac{|t|}{2n}\\
&\leq1+\frac1{2(m+1)}
\leq1+\varepsilon.
\end{aligned}
\]

Combining these estimates with
\eqref{eq:three-ball-residual-norm}, we conclude that
\[
\|x+y_i-\widetilde y\|_X\leq1+\varepsilon
\qquad(i=1,2,3).
\]
This proves the restricted three-ball property.
Therefore $Y$ is an $M$-ideal in $X$.

\medskip
\textbf{Step 4. $X$ does not have the convex DLD2P}.
We prove that $\Delta_X\subseteq Y$ by applying
Lemma~\ref{lem:isolated-norming}.

Fix $x=(y,t)\in S_X$ with $t\neq0$, and write
\[
y=((w^j,z^j))_{j=1}^\infty,\qquad
\ell_j=\lim_n w_n^j,\qquad
L=\sup_j|\ell_j|.
\]
The definition of the norm gives
\begin{equation}\label{eq:step4-mixed-bound}
|\ell_j|+\frac{|z_n^j+t/2|}{n}\leq1
\qquad(j,n\in\mathbb N).
\end{equation}
If $|\ell_j|=1$ for some $j$, then
\eqref{eq:step4-mixed-bound} forces $z_n^j=-t/2$ for every
$n$, contradicting $z^j\in c_0$. Thus $|\ell_j|<1$ for every
$j$. Since $\ell_j\to0$, it follows that $L<1$.

The condition $y\in c_0(E)$ also implies
\[
\sup_j|w_n^j-\ell_j|\longrightarrow0,
\qquad
\sup_j|z_n^j|\longrightarrow0.
\]
Indeed, the outer tail is uniformly small, while the remaining
rows are finite in number. Consequently, there exist
$m\in\mathbb N$ and $0\leq\rho<1$ such that
\begin{equation}\label{eq:step4-tail-bound}
\max\left\{
|w_n^j|,\ |z_n^j+t/2|,\
|\ell_j|+\frac{|z_n^j+t/2|}{n}
\right\}\leq\rho
\qquad\text{if }\max\{j,n\}>m.
\end{equation}
For the mixed term, the outer tail is controlled by
\[
|\ell_j|+\frac{|z_n^j+t/2|}{n}
\leq\|(w^j,z^j)\|_E+\frac12,
\]
whereas the inner tail is controlled by
\[
|\ell_j|+\frac{|z_n^j+t/2|}{n}
\leq L+\frac1n.
\]
Here we used $|t|\leq1$ and $|z_n^j+t/2|\leq1$.

We now specify a symmetric norming set for $X$.
For $(v,s)\in X$, where $v=((a^j,b^j))_j$, define the
linear functionals
\[
\begin{aligned}
T(v,s)&=s,\qquad
W_{j,n}(v,s)=a_n^j,\qquad
Z_{j,n}(v,s)=b_n^j+\frac s2,\qquad
\Lambda_j(v,s)=\lim_n a_n^j.
\end{aligned}
\]
Let
\[
\begin{aligned}
\mathcal S
={}\{\pm T\}
&{}\cup
\{\pm W_{j,n},\ \pm Z_{j,n}:j,n\in\mathbb N\}
{}\cup
\left\{
\sigma\Lambda_j+\frac{\tau}{n}Z_{j,n}:
j,n\in\mathbb N,\ \sigma,\tau\in\{-1,1\}
\right\}.
\end{aligned}
\]
Each member of $\mathcal S$ belongs to $B_{X^*}$, and
$\mathcal S$ is symmetric. Moreover, the identity
\[
|a|+|b|
=\max{\{|a+b|, |a-b|\}}
\qquad(a,b\in\mathbb R)
\]
and the definition of the norm show that
\begin{equation}\label{eq:step4-norming-set}
\|u\|_X=\sup_{f\in\mathcal S}f(u)
\qquad(u\in X).
\end{equation}

Let $\mathcal S_m$ be the finite subset obtained by restricting
$j,n$ to $\{1,\ldots,m\}$ in the definition of $\mathcal S$,
while retaining $\pm T$. By \eqref{eq:step4-tail-bound},
\begin{equation}\label{eq:step4-functional-tail}
|f(x)|\leq\rho
\qquad(f\in\mathcal S\setminus\mathcal S_m).
\end{equation}
It follows that
\[
A(x):=\{f\in\mathcal S:f(x)=1\}
\subseteq\mathcal S_m,
\]
so $A(x)$ is finite. It is also nonempty: otherwise, since
$\mathcal S_m$ is finite and $\rho<1$,
\eqref{eq:step4-functional-tail} would imply
$\sup_{f\in\mathcal S}f(x)<1$, contrary to
\eqref{eq:step4-norming-set} and $\|x\|_X=1$.

Finally, put
\[
\gamma:=
\max\left(
\{\rho\}\cup
\{f(x):f\in\mathcal S_m\setminus A(x)\}
\right).
\]
Since $\mathcal S_m\setminus A(x)$ is finite and each of its
members takes a value strictly smaller than $1$ at $x$,
we have $\gamma<1$. Together with
\eqref{eq:step4-functional-tail}, this gives
\[
\sup\{f(x):f\in\mathcal S\setminus A(x)\}
\leq\gamma<1.
\]
Lemma~\ref{lem:isolated-norming} therefore yields
$x\notin\Delta_X$. Since $x=(y,t)\in S_X$ with $t\neq0$
was arbitrary, we conclude that $\Delta_X\subseteq Y$.

On the other hand, the proof of \cite[Theorem~3.2(b)]{LangemetsPirk2021} yields
$\Delta_X\cap Y\subseteq\Delta_Y$ whenever $Y$ is an $M$-ideal
in $X$. Together with $\Delta_X\subseteq Y$ and the isometric
inclusion $\Delta_Y\subseteq\Delta_X$, this gives
$\Delta_X=\Delta_Y$. Hence, Step 2 implies
\[
B_Y=\operatorname{conv}(\Delta_Y)
\subseteq\operatorname{conv}(\Delta_X)
\subseteq B_Y.
\]
Since $B_Y$ is closed in $X$ and $(0,1)\in S_X\setminus Y$,
we obtain
\[
\overline{\operatorname{conv}}(\Delta_X)
=\operatorname{conv}(\Delta_X)
=B_Y\subsetneq B_X.
\]
Thus $Y$ has the convex DLD2P, whereas $X$ does not.
\end{proof}

Taken together, Corollary~\ref{cor:almost} and
Proposition~\ref{prop:counterexample} show that approximation
at every positive scale does not guarantee the convex DLD2P.
The exact property can fail to pass from an $M$-ideal to its
ambient space even in codimension one. The following remark
makes this distinction precise: taking closed convex hulls
need not commute with intersecting the sets of approximate
$\Delta$-points over all positive scales.

\begin{remark}\label{rem:sharpness}
For every Banach space $X$,
\[
\bigcap_{\varepsilon>0}\Delta_X^{(\varepsilon)}=\Delta_X.
\]
Indeed, membership in $\Delta_X^{(\varepsilon)}$ implies
$\|x\|\geq1-\varepsilon$, so membership for every
$\varepsilon>0$ forces $\|x\|=1$.

For the space in Proposition~\ref{prop:counterexample},
Corollary~\ref{cor:almost} gives
\[
B_X=\overline{\conv}\,\Delta_X^{(\varepsilon)}
\qquad(\varepsilon>0),
\]
whereas
\[
\overline{\conv}\left(
\bigcap_{\varepsilon>0}\Delta_X^{(\varepsilon)}
\right)
=\overline{\conv}(\Delta_X)=B_Y\subsetneq B_X.
\]
Thus the conclusion at each fixed scale cannot be strengthened
to the convex DLD2P.
\end{remark}

\section*{AI disclosure statement}
The author used OpenAI ChatGPT through a ChatGPT for Academic Researchers workspace. The model ChatGPT 6.0 Astra was used to develop the initial idea for constructing a counterexample to Proposition~3.5 of \cite{LangemetsPirk2021}. In Proposition~\ref{prop:counterexample}, we present a simpler counterexample than the initial counterexample constructed by AI. All outputs were critically reviewed and independently verified by the author, who take full responsibility for the final content.

\section*{Acknowledgements}
The author is grateful to Juan Guerrero-Viu for spotting the error in the proof of Proposition~3.5 of \cite{LangemetsPirk2021}
and informing us about it.

\end{document}